\theoremstyle{definition}
\newtheorem{definition}{Definition}
\newtheorem{remark}{Remark}
\theoremstyle{plain}
\newtheorem{theorem}{Theorem}
\newtheorem{probstat}{Problem}
\DeclareMathOperator*{\argmin}{argmin}
\title{\LARGE \bf
On Learning Discrete-Time Fractional-Order Dynamical Systems
}
\author{Sarthak Chatterjee$^{\dag}$ \qquad S\'ergio Pequito$^{\ddag}$
\thanks{$^{\dag}$Sarthak Chatterjee is with the department of Electrical, Computer, and Systems Engineering, Rensselaer Polytechnic Institute, Troy, NY, USA
        {\tt\small sarthak.chatterjee92@gmail.com}}%
\thanks{$^{\ddag}$S\'ergio Pequito is with the Delft Center for Systems and Control, Delft University of Technology, Delft, The Netherlands
        {\tt\small Sergio.Pequito@tudelft.nl}}%
}
\begin{document}

\maketitle
\thispagestyle{empty}
\pagestyle{empty}

\begin{abstract}

Discrete-time fractional-order dynamical systems (DT-FODS) have found innumerable applications in the context of modeling spatiotemporal behaviors associated with \mbox{long-term} memory. Applications include neurophysiological signals such as electroencephalogram (EEG) and electrocorticogram (ECoG). Although learning the spatiotemporal parameters of DT-FODS is not a new problem, when dealing with neurophysiological signals we need to guarantee performance standards. Therefore, we need to understand the trade-offs between sample complexity and estimation accuracy of the system parameters. Simply speaking, we need to address the question of how many measurements we need to collect to identify the system parameters up to an uncertainty level. In this paper, we address the problem of identifying the spatial and temporal parameters of DT-FODS. The main result is the first result on non-asymptotic finite-sample complexity guarantees of identifying \mbox{DT-FODS}. Finally, we provide evidence of the efficacy of our method in the context of forecasting real-life intracranial EEG time series collected from patients undergoing epileptic seizures.

\end{abstract}

\section{Introduction}

Learning a model (or system identification) is the problem of estimating the parameters of a dynamical system given an input-output time series of the trajectories that have been generated by the former. In many problem settings in control theory, time series analysis, reinforcement learning, and econometrics, estimating the parameters of a system from its \mbox{input-output} behavior is an important problem, especially in the absence of analytical tools to find the underlying dynamics. Extensive attention has been paid to the topic, with~\cite{ljung1999sysid} and~\cite{soderstrom1988system} being comprehensive references on the same.



Lately, there has been a renewed interest in investigating finite-sample guarantees for problems in classical system identification and control theory from the lens of statistical learning theory. On the one hand, significant work has been done on developing finite-time regret guarantees for the linear-quadratic regulator when the latter is trying to control a system with unknown dynamics~\cite{abbasi2011regret,dean2018regret,dean2019sample,dean2019safely,mania2019certainty,cohen2019learning}. Alternatively, \mbox{finite-sample} guarantees have also been derived in the context of high probability bounds with respect to the norm of the estimation error of the system's parameters in a wide variety of problem settings~\cite{tu2017non,faradonbeh2018finite,hazan2018spectral,hardt2018gradient,sarkar2019near,sarkar2021finite,oymak2019non,tsiamis2019finite,simchowitz2018learning,simchowitz2019learning,wagenmaker2020active,jedra2019sample,jedra2020finite,zheng2021non}.

Nonetheless, most of the work focuses on cases where there is a Markovian dependence of the current state of a system on just the previous state, which is insufficient in describing the \mbox{long-term} behavior of the aforementioned systems. Yet, systems that we often encounter in real-life demonstrate phenomena such as hysteresis and long-term memory, in which the current system state is dependent on a combination of several past states or the entire gamut of states seen so far in time.

In particular, fractional-order dynamical systems (FODS) have attracted a lot of interest in recent years for being able to successfully model a wide variety of dynamical system behaviors, chief among them being (nonexponential) \mbox{power-law} decay in the dependence of the current state on past states, systems exhibiting long-term memory or fractal properties, or dynamics where there are adaptations across multiple time scales~\cite{moon2008chaotic,lundstrom2008fractional,werner2010fractals,thurner2003scaling,teich1997fractal}. FODS have been used in domains as disparate as biological swarms~\cite{west2014networks}, chaotic systems~\cite{petravs2011fractional}, gaseous dynamics~\cite{chen2010anomalous}, and cyber-physical systems~\cite{xuecps}, to mention a few. In the context of this work, we emphasize the applications to neuromodulation as well as modeling neurophysiological signals such as electroencephalogram (EEG) or electrocorticogram (ECoG)~\cite{romero2020fractional,chatterjee2020fractional,magin2006fractional}.

In this paper, we will investigate the problem of learning the spatial and temporal parameters of a discrete-time fractional-order dynamical system (DT-FODS). We provide \mbox{finite-sample} and iteration complexity guarantees for the same, and we further illustrate the working of our proposed approach on real-life intracranial EEG (iEEG) data collected from patients undergoing epileptic seizures. In the context of using model-based approaches for neurostimulation (such as to perform epileptic seizure suppression~\cite{chatterjee2020fractional}), robust guarantees for the iteration and finite-sample complexities of learning the model parameters is of paramount importance. Additionally, using approaches such as model predictive control (MPC) in order to develop neurostimulation strategies often require certifications on the minimum amount of data required in order to learn robust models.

The rest of the paper is structured as follows. Section II introduces DT-FODS and the problem that we seek to address in this paper. Section III outlines an approach to learn the parameters of a DT-FODS as well as the main results in terms of the finite-sample and iteration complexity guarantees for the same. Section IV demonstrates the working of our approach on real-life iEEG data. Section V concludes the paper.


\section{Problem Statement}

We consider a linear discrete-time fractional-order dynamical system described as follows
\begin{align}\label{eq:frac_model}
    \Delta^\alpha x[k+1] &= Ax[k] + w[k] \nonumber \\
    x[0] &= x_0,
\end{align}
where $x[k] \in \mathbb{R}^n$ is the \emph{state} for time step $k \in \mathbb{N}$ and $A \in \mathbb{R}^{n \times n}$ is the \emph{system matrix}. The \emph{process noise} $w[k]$ is assumed to be an i.i.d. additive white noise process originating from the standard normal distribution, i.e., \mbox{$w[k] \thicksim \mathcal{N}(0,\sigma^2 I_n)$}. The above system model is similar to a classic discrete-time linear time-invariant model except for the inclusion of the Gr\"unwald-Letnikov fractional derivative, whose expansion and discretization for the $i$-th state, \mbox{$1 \leq i \leq n$}, can be written as
\begin{equation}\label{eq:frac_deriv}
    \Delta^{\alpha_i} x_i[k] = \sum_{j=0}^k \psi (\alpha_i,j) x_i[k-j],
\end{equation}
where $\alpha_i$ is the fractional order corresponding to state $i$ (with $\alpha_i > 0$ for all $i$) and
\begin{equation}
    \label{eq:weights}
    \psi(\alpha_i,j) = \frac{\Gamma(j-\alpha_i)}{\Gamma(-\alpha_i) \Gamma(j+1)},
\end{equation}
with $\Gamma(\cdot)$ being the gamma function defined by \mbox{$\Gamma (z) = \int_0^{\infty} s^{z-1} e^{-s} \: \mathrm{d}s$} for all complex numbers $z$ with $\Re (z) > 0$~\cite{DzielinskiFOS}.

Given the above ingredients, we seek to solve the following problem in this paper.

\begin{probstat}
Consider the discrete-time fractional-order dynamical system model given in~\eqref{eq:frac_model} with $w[k]$ being an additive white Gaussian process noise. We seek to provide \mbox{non-asymptotic} finite-sample as well as iteration complexity guarantees of identifying the temporal and spatial system parameters of~\eqref{eq:frac_model} ($\{ \alpha_i \}_{i=1}^n$ and $A$, respectively) using an observed trajectory of the states.
\end{probstat}

\section{Learning Discrete-Time Fractional-Order Dynamical Systems}

In this section, we establish in a sequential manner, a bi-level iterative scheme to learn the spatial and temporal components of a DT-FODS. We first start with some fundamentals regarding \mbox{DT-FODS}.

\subsection{Preliminaries}

Let us first review some essential theory for \mbox{fractional-order} systems, including an approximation of~\eqref{eq:frac_model} as an LTI system. Using the expansion of the Gr\"unwald-Letnikov derivative in~\eqref{eq:frac_deriv}, we have
\begin{align}
    \Delta^\alpha x[k] &= \begin{bmatrix}\Delta^{\alpha_1}x_1[k]\\ \vdots \\ \Delta^{\alpha_n}x_n[k] \end{bmatrix} = \begin{bmatrix}\sum_{j=0}^k\psi(\alpha_1,j)x_1[k-j]\\ \vdots \\ \sum_{j=0}^k\psi(\alpha_n,j)x_n[k-j]\end{bmatrix} \nonumber \\ &= \sum_{j=0}^k\underbrace{\begin{bmatrix}\alpha_1 & \ldots & 0\\ \vdots & \ddots & \vdots \\ 0 & \ldots & \alpha_n\end{bmatrix}}_{D(\alpha,j)}\begin{bmatrix}x_1[k-j] \\ \vdots\\ x_n[k-j]\end{bmatrix} \nonumber \\ &= \sum_{j=0}^k D(\alpha,j)x[k-j].
    \label{eq:Deltaalpha}
\end{align}
The above formulation distinctly highlights one of the main peculiarities of DT-FODS in that the fractional derivative $\Delta^{\alpha} x[k]$ is a weighted linear combination of not just the previous state but of every single state up to the current one, with the weights given by~\eqref{eq:weights} following a power-law decay.

Plugging~\eqref{eq:Deltaalpha} into the DT-FODS formulation~\eqref{eq:frac_model}, we have
\begin{equation}
    \sum_{j=0}^{k+1} D(\alpha,j) x[k+1-j] = Ax[k] + w[k],
\end{equation}
or, equivalently,
\begin{equation}
    D(\alpha,0) x[k+1] = - \sum_{j=1}^{k+1} D(\alpha,j) x[k+1-j] + Ax[k] + w[k],
\end{equation}
which leads to
\begin{equation}
\label{eq:D_evol}
    x[k+1] = -\sum_{j=0}^k D(\alpha,j+1) x[k-j] + Ax[k] + w[k],
\end{equation}
since $D(\alpha,0) = I_n$. Alternatively,~\eqref{eq:D_evol} can be written as
\begin{align}\label{eq:state_evol_2}
    x[k+1] &= \sum_{j=0}^k A_j x[k-j] + w[k] \nonumber \\
    x[0] &= x_0,
\end{align}
where
\begin{equation}
    A_j = \begin{cases}A -\textnormal{diag}(\alpha_1,\ldots,\alpha_n) & \text{if} \; j=0\\ -D(\alpha,j+1) & \text{if} \; j \geq 1 \end{cases}.
\end{equation}

In the subsequent discussion, we will consider a truncation of the last $p$ temporal components of~\eqref{eq:frac_model}. Defining
\begin{equation}
\label{eq:aug_st_vec}
    \tilde{x}[k] = \begin{bmatrix} x[k] \\ x[k-1] \\ \vdots \\ x[k-p+1] \end{bmatrix}
\end{equation}
as the \emph{augmented} state vector and assuming that the system is \emph{causal}, i.e., the state and disturbances are all considered to be zero before the initial time (i.e., $x[k] = 0$ and $w[k] = 0$ for all $k < 0$), we have
\begin{align}
\label{eq:p_aug_LTI}
    \tilde{x}[k+1] &= \underbrace{\begin{bmatrix} A_0 & \ldots & A_{p-2} & A_{p-1}\\ I & \ldots & 0 & 0\\ \vdots & \ddots & \vdots & \vdots\\ 0 & \ldots & I & 0\end{bmatrix}}_{\tilde{A}}\tilde{x}[k] + \underbrace{\begin{bmatrix}I \\ 0\\ \vdots \\ 0\end{bmatrix}}_{\tilde{B}^w}w[k] \nonumber \\ &= \tilde{A}\tilde{x}[k] + \tilde{B}^w w[k],
\end{align}
for all $k \geq 0$. Note that~\eqref{eq:p_aug_LTI} is an LTI system model, which we refer to as the \emph{$p$-augmented LTI approximation} of~\eqref{eq:frac_model}.

\subsection{Two-level iterative bisection scheme to identify the parameters of a DT-FODS}

Having established the $p$-augmented LTI approximation of a DT-FODS in~\eqref{eq:p_aug_LTI}, we will now use a two-level iterative \mbox{bisection-like} approach to identify the spatial and temporal parameters of the \mbox{DT-FODS} in~\eqref{eq:frac_model}. In particular, we start by noting the fact that for the \mbox{Gr\"unwald-Letnikov} definition of the fractional derivative provided in~\eqref{eq:frac_deriv}, $\alpha_i=1$ and $\alpha_i=-1$ can be interpreted, respectively, to be the discretized version of the derivative and the integral for $1 \leq i \leq n$, as defined in the sense of ordinary calculus.

In order to proceed with a bisection-like approach to identify $\{ \alpha_i \}_{i=1}^n$ and $\tilde{A}$, we first fix the endpoints of the search space for $\alpha_i$ to be $\underline{\alpha_i} = -1$ and $\overline{\alpha_i}=1$ for $1 \leq i \leq n$. We also calculate the value of $\alpha_{c,i} = (\underline{\alpha_i}+\overline{\alpha_i})/2$. Now, given the values of $\underline{\alpha_i}, \overline{\alpha_i}$, and $\alpha_{c,i}$, we calculate using the ordinary least squares (OLS) technique described in Section~\ref{sec:OLS} below, the row vectors $\underline{\tilde{a}_i}, \overline{\tilde{a_i}}$, and $\tilde{a}_{c,i}$, respectively, that guide the evolution of the states in the $p$-augmented LTI approximation
\begin{equation}
    \tilde{x}_i[k+1] = \tilde{a}_i \tilde{x}_i[k] + \tilde{b}^w_i w_i[k],
\end{equation}
where $\tilde{a}_i = \underline{\tilde{a}_i}$ when $\alpha_i = \underline{\alpha_i}$, $\tilde{a}_i = \overline{\tilde{a}_i}$ when $\alpha_i = \overline{\alpha_i}$, and $\tilde{a}_i = \tilde{a}_{c,i}$ when $\alpha_i = \alpha_{c,i}$ with $\tilde{b}_i^w$ being obtained by extracting the $i$-th row of $\tilde{B}^w$ for $1 \leq i \leq n$.

Next, we propagate the dynamics according to the obtained values of the parameters $\tilde{a}_i$ and calculate the mean squared error (MSE) between the states obtained as a result of the estimated $\tilde{a}_i$'s and the observed states. If the MSE is lesser corresponding to the $\underline{\alpha_i}$ case, then we set $\overline{\alpha_i} = \alpha_{c,i}$. If the MSE is lesser corresponding to the $\overline{\alpha_i}$ case, then we set $\underline{\alpha_i} = \alpha_{c,i}$. This approach is repeated until $\lvert \overline{\alpha_i} - \underline{\alpha_i} \rvert$ is lesser than a certain pre-specified tolerance $\varepsilon$. Algorithm~\ref{alg:sysid} summarizes the procedure of determining the spatial and temporal components of a DT-FODS using the two-level iterative bisection-like approach we have outlined above.

Empirically, numerical and experimental evidence suggests that the computation of the temporal parameters of a DT-FODS using, e.g., a wavelet-like technique described in~\cite{flandrin1992wavelet}, does not directly depend on the number of samples or observations used for the aforementioned estimation procedure. Empirical evidence suggests that a small number of samples (usually $30$ to $100$) suffice in order to compute $\{ \alpha_i \}_{i=1}^n$. Therefore, for the estimation of the temporal components of a DT-FODS, we specify the iteration complexity of the bisection-like process and then, in Section~\ref{sec:sampcomp}, we investigate the finite-sample complexity of computing the spatial parameters using a least squares approach.

Apropos the above discussion, in the next result, we certify the iteration complexity of the bisection method to find the spatial and temporal parameters of a DT-FODS.

\begin{theorem}
The bisection-based technique detailed above to find the temporal components of a DT-FODS is minmax optimal and the number $\nu$ of iterations needed in order to achieve a certain specified tolerance $\varepsilon$ when this technique is used is bounded above by
\begin{equation}
    \nu \leq \left\lceil \log_2 \left( \frac{2}{\varepsilon} \right) \right\rceil.
\end{equation}
\end{theorem}

\begin{proof}
See~\cite{sikorski1982bisection} for a proof.
\end{proof}

\begin{algorithm}
\caption{Learning the parameters of a DT-FODS}
\begin{algorithmic}[1]
\label{alg:sysid}
\FOR{$i=1$ to $n$}
\STATE{Initialize $\underline{\alpha_i}=-1$, $\overline{\alpha_i}=1$, and tolerance $\varepsilon$.}
\STATE{Calculate $\alpha_{c,i} = (\underline{\alpha_i}+\overline{\alpha_i})/2$.}
\STATE{Given the above values of $\underline{\alpha_i}$, $\overline{\alpha_i}$, and $\alpha_{c,i}$, find, using the ordinary least squares (OLS) method, the row vectors $\underline{\tilde{a}_i}, \overline{\tilde{a_i}}$, and $\tilde{a}_{c,i}$, respectively, that guide the evolution of the states in the $p$-augmented LTI approximation $\tilde{x}_i[k+1] = \tilde{a}_i \tilde{x}_i[k] + \tilde{b}^w_i w_i[k]$.}
\STATE{Propagate the dynamics according to the obtained OLS estimates and calculate the mean squared error (MSE) between the propagated states and the observed state trajectory.}
\IF{MSE is lesser for the $\underline{\alpha_i}$ case}
\STATE{Set $\overline{\alpha_i} = \alpha_{c,i}$.}
\ELSIF{MSE is lesser for the $\overline{\alpha_i}$ case}
\STATE{Set $\underline{\alpha_i} = \alpha_{c,i}$.}
\ENDIF
\STATE{Terminate if $\lvert \overline{\alpha_i} - \underline{\alpha_i} \rvert < \varepsilon$, else return to step 3.}
\ENDFOR
\end{algorithmic}
\end{algorithm}


\subsection{The ordinary least squares (OLS) method to identify the spatial parameters of a DT-FODS}
\label{sec:OLS}

Having outlined the details of the bi-level iterative bisection-like scheme to identify the spatial and temporal components of a DT-FODS, we now delve into the problem of identifying the spatial parameters using a least squares-like approach. We start with the $p$-augmented LTI model of~\eqref{eq:p_aug_LTI}, i.e.,
\begin{equation}
    \tilde{x}[k+1] = \tilde{A}\tilde{x}[k] + \tilde{B}^w w[k].
\end{equation}
The OLS method then outputs the matrix $\underline{\tilde{A}}[K]$ as the solution of the following optimization problem
\begin{equation}
    \underline{\tilde{A}}[K] \coloneqq \argmin_{\tilde{A} \in \mathbb{R}^{d \times d}} \sum_{k=1}^K \frac{1}{2} \| \tilde{x}[k+1] - \tilde{A} \tilde{x}[k] \|_2^2,
\end{equation}
by observing the state trajectory of~\eqref{eq:p_aug_LTI}, i.e., $\{ x[0], x[1], \ldots, x[K+1] \}$, and the process noise $w[k]$ being i.i.d. zero-mean Gaussian.


\subsection{Finite-sample complexity guarantees for OLS system identification of the spatial parameters of a DT-FODS}
\label{sec:sampcomp}

Prior to characterizing the sample complexity of the OLS method for the $p$-augmented LTI approximation of the \mbox{DT-FODS}, we define a few quantities of interest.
\begin{definition}
\label{def:gramian}
The \emph{finite-time controllability Gramian} of the approximated system~\eqref{eq:p_aug_LTI}, $W_t$, is defined by
\begin{equation}
    W_t \coloneqq \sum_{j=0}^{t-1} \tilde{A}^j (\tilde{A}^j)^{\mathsf{T}}.
\end{equation}
Intuitively, the controllability Gramian gives a quantitative measure of how much the system is excited when induced by the process noise $w[k]$ acting as an input to the system.
\end{definition}
\begin{definition}
Given a symmetric matrix $A \in \mathbb{R}^{d \times d}$, we define $\lambda_{\max}(A)$ and $\lambda_{\min}(A)$ to denote, respectively, the maximum and minimum eigenvalues of the matrix $A$.
\end{definition}
\begin{definition}
For any square matrix $A \in \mathbb{R}^{d \times d}$, the \emph{spectral radius} of the matrix $A$, $\rho(A)$, is given by the largest absolute value of its eigenvalues.
\end{definition}
\begin{definition}
The \emph{operator norm} of a matrix is denoted by $\| \cdot \|_{\mathrm{op}}$.
\end{definition}



We then have our first result that characterizes the sample complexity of the above OLS method for the DT-FODS approximation.
\begin{theorem}[\cite{simchowitz2018learning}]
\label{thm:samp_comp}
Fix $\delta \in (0,1/2)$ and consider the \mbox{$p$-augmented} system in~\eqref{eq:p_aug_LTI}, where $\tilde{A} \in \mathbb{R}^{d \times d}$ is a marginally stable matrix (i.e., $\rho(\tilde{A}) \leq 1$) and $w[k] \thicksim \mathcal{N}(0,\sigma^2 I)$. Then, there exist universal constants $c,C > 0$ such that,
\begin{align}
\label{eq:bound_without_inputs}
    \mathbb{P}\Bigg[ \left\| \underline{\tilde{A}}[K] - \tilde{A} \right\|_{\mathrm{op}} &\leq  \frac{C}{\sqrt{K \lambda_{\min }\left(W_{k}\right)}} \nonumber \\ \times &\sqrt{d \log \left( \frac{d}{\delta} \right) +\log \operatorname{det}\left(W_{K} W_{k}^{-1}\right)} \Bigg] \nonumber \\ & \geq 1 - \delta,
\end{align}
for any $k$ such that
\begin{equation}
    \frac{K}{k} \geq c\left(d \log \left( \frac{d}{\delta} \right) + \log \operatorname{det}\left(W_{K} W_{k}^{-1}\right)\right)
\end{equation}
holds.
\end{theorem}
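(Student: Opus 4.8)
The plan is to recognize that this statement is a direct instantiation, for the $p$-augmented LTI approximation~\eqref{eq:p_aug_LTI}, of the general finite-sample bound for ordinary least squares identification of marginally stable linear systems established in~\cite{simchowitz2018learning}. Accordingly, the proof proceeds in two stages: first, verify that~\eqref{eq:p_aug_LTI} satisfies the structural hypotheses required by that result (namely $\rho(\tilde{A}) \le 1$ and i.i.d.\ Gaussian excitation), and second, reproduce the self-normalized martingale argument that yields the bound. I would begin from the closed-form OLS solution, which gives the error decomposition
\begin{equation}
    \underline{\tilde{A}}[K] - \tilde{A} = \left( \sum_{k=1}^K \tilde{B}^w w[k]\, \tilde{x}[k]^{\mathsf{T}} \right) \left( \sum_{k=1}^K \tilde{x}[k]\, \tilde{x}[k]^{\mathsf{T}} \right)^{-1}.
\end{equation}
Writing $S_K = \sum_{k=1}^K \tilde{x}[k]\tilde{x}[k]^{\mathsf{T}}$ for the empirical Gram matrix, the operator norm is controlled by the product of a self-normalized term and $\lambda_{\min}(S_K)^{-1/2}$:
\begin{equation}
    \left\| \underline{\tilde{A}}[K] - \tilde{A} \right\|_{\mathrm{op}} \le \left\| \left( \sum_{k=1}^K \tilde{B}^w w[k]\, \tilde{x}[k]^{\mathsf{T}} \right) S_K^{-1/2} \right\|_{\mathrm{op}} \lambda_{\min}(S_K)^{-1/2}.
\end{equation}

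The two factors are handled by the two pillars of the analysis. For the numerator, I would invoke a self-normalized martingale concentration inequality: conditioned on the state filtration the increments $\tilde{B}^w w[k]$ are Gaussian, so after a covering argument over the unit sphere the self-normalized term is bounded with probability at least $1-\delta/2$ by a quantity of order $\sqrt{d \log(d/\delta) + \log\det(W_K W_k^{-1})}$. The $\log\det$ factor is precisely the metric-entropy cost of normalizing by the data-dependent matrix $S_K$, whose scale ranges between that of $W_k$ and $W_K$ along the trajectory. For the denominator, I would establish a high-probability lower bound of the form $\lambda_{\min}(S_K) \gtrsim K\,\lambda_{\min}(W_k)$, which is where the controllability Gramian enters: since $\mathbb{E}[\tilde{x}[k]\tilde{x}[k]^{\mathsf{T}}]$ is governed by $W_k$, the sample covariance inherits excitation in every direction.

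Combining the two bounds and dividing yields the claimed $C/\sqrt{K \lambda_{\min}(W_k)}$ prefactor multiplying the square-root entropy term, and a union bound over the two $\delta/2$ events gives the $1-\delta$ confidence. The condition $K/k \ge c(d\log(d/\delta) + \log\det(W_K W_k^{-1}))$ is exactly the requirement that the trajectory be long enough, relative to the horizon $k$ at which the Gramian is evaluated, for the lower bound on $\lambda_{\min}(S_K)$ to dominate its fluctuations, so that the two events are simultaneously in force.

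I expect the main obstacle to be the lower bound on $\lambda_{\min}(S_K)$ in the \emph{marginally stable} regime $\rho(\tilde{A}) \le 1$. Unlike the strictly stable case, here the state covariance $W_k$ need not converge and can grow with $k$, so standard stationary-process concentration arguments fail. The resolution, following~\cite{simchowitz2018learning}, is a block martingale small-ball (BMSB) argument: one partitions the trajectory into blocks and shows that, in every fixed direction, the projected state has enough anti-concentration to guarantee, via a martingale lower-tail inequality, that $\sum_k \langle v, \tilde{x}[k]\rangle^2$ is bounded below with high probability uniformly over $v$ on the sphere. Verifying the BMSB condition for the specific companion-type structure of $\tilde{A}$ in~\eqref{eq:p_aug_LTI}, where the degenerate input matrix $\tilde{B}^w$ injects noise only into the leading block and excitation of the memory directions must propagate through the dynamics, and tracking how the fractional-order coefficients $\{\alpha_i\}$ shape $W_k$, is the delicate part of the argument.
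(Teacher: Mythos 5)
Your proposal is mathematically sound, but it takes a genuinely different route from the paper: you propose to re-derive the self-normalized-martingale-plus-small-ball machinery of~\cite{simchowitz2018learning} from scratch, whereas the paper invokes Theorem~2.4 of that reference as a black box and only verifies its three hypotheses. Concretely, the paper checks (a) that the Gaussian process noise is $\sigma^2$-sub-Gaussian, (b) that the augmented state sequence satisfies the $(k,\sigma^2 W_{\lfloor k/2\rfloor},\tfrac{3}{20})$-block martingale small-ball condition by citing Proposition~3.1 of the same reference, and (c) the high-probability upper bound $\mathbf{X}^{\mathsf{T}}\mathbf{X} \preceq \frac{d\sigma^2}{\delta} K W_K$ via a Markov-inequality and trace argument; the $d\log(d/\delta)$ term in~\eqref{eq:bound_without_inputs} then falls out of the specific choice $\overline{W} = \frac{d\sigma^2}{\delta}W_K$ inside $\log\det\left(\overline{W}W_{\mathrm{sb}}^{-1}\right)$ --- a step your sketch absorbs into ``a quantity of order $\sqrt{d\log(d/\delta)+\log\det(W_K W_k^{-1})}$'' without isolating where the two terms come from. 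What your route buys is transparency about the internals of the bound, and, more importantly, it surfaces the one point the paper's citation-based proof glosses over: the noise enters~\eqref{eq:p_aug_LTI} only through the degenerate matrix $\tilde{B}^w$, so excitation of the memory coordinates must propagate through the companion structure of $\tilde{A}$, and the small-ball verification of Proposition~3.1 (stated for isotropically excited LTI systems, with Gramian $\sum_j \tilde{A}^j(\tilde{A}^j)^{\mathsf{T}}$ rather than $\sum_j \tilde{A}^j\tilde{B}^w(\tilde{B}^w)^{\mathsf{T}}(\tilde{A}^j)^{\mathsf{T}}$) does not apply verbatim. You correctly flag that verification as the delicate part but leave it undone, while the paper leaves it implicit in the citation; either way, it is the only piece of work this theorem requires beyond~\cite{simchowitz2018learning}, so if you carry out your plan you should concentrate your effort there.
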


\begin{proof}
The proof of the theorem is a consequence of Theorem 2.4 in~\cite{simchowitz2018learning}. In general, if we consider the time series $(X[t],Y[t])_{t \geq 1}$, where $Y[t] = A_\star X[t] + \eta [t]$, with $Y[t], \eta[t] \in \mathbb{R}^n, X[t] \in \mathbb{R}^d$, and $A_\star \in \mathbb{R}^{n \times d}$, the OLS estimate is obtained by solving the optimization problem
\begin{equation}
\widehat{A}(T) \coloneqq \argmin _{A \in \mathbb{R}^{n \times d}} \sum_{t=1}^{T} \frac{1}{2}\left\|Y[t] - A X[t]\right\|_{2}^{2},
\end{equation}
where $T$ is the observation horizon. We consider \mbox{$\mathcal{F}_t \coloneqq \sigma (\eta[0], \eta[1], \ldots, \eta[t], X[1], \ldots, X[t])$} to be a filtration generated by the states and the noise, and that $\eta[t] \vert \mathcal{F}_{t-1}$ to be zero-mean and $\sigma^2$-sub-Gaussian (or, in other words, sub-Gaussian with a variance proxy $\sigma^2$), i.e., \mbox{$\mathbb{E} [ \exp (\lambda \eta[t] \vert \mathcal{F}_t) ] \leq \exp( \lambda^2 \sigma^2 / 2)$} for all $\lambda \in \mathbb{R}$.

If we fix $\epsilon, \delta \in (0,1)$, $T \in \mathbb{N}$, and there exist $W_{\mathrm{sb}}$ and $\overline{W}$ such that $0 \prec W_{\mathrm{sb}} \preceq \overline{W}$ (where the $W$'s are the controllability Gramians defined as in Definition~\ref{def:gramian}), then for a random sequence $(X[t],Y[t])_{t \geq 1} \in ( \mathbb{R}^d \times \mathbb{R}^n )^T$ that satisfies 
\begin{enumerate}
    \item[(a)] $Y[t] = A_\star X[t] + \eta [t]$ with $\eta[t] \vert \mathcal{F}_t$ being zero-mean and $\sigma^2$-sub-Gaussian,
    \item[(b)] $\{ X[1],\ldots,X[T] \}$ satisfies the $(k,W_{\mathrm{sb}},p)$-martingale small-ball condition (Definition 2.1 in~\cite{simchowitz2018learning}), and,
    \item[(c)] $\mathbb{P} \left[ \sum_{t=1}^T X[t] X[t]^{\mathsf{T}} \npreceq T \overline{W} \right] \leq \delta$,
\end{enumerate}
we have that if
\begin{equation}
T \geq \frac{10 k}{p^{2}}\left(\log \left(\frac{1}{\delta}\right)+2 d \log \left(\frac{10}{p} \right)+\log \operatorname{det}\left(\overline{W} W_{\mathrm{sb}}^{-1}\right)\right),
\end{equation}
then
\begin{dmath}
\mathbb{P} \left[ \left\| \widehat{A}(T) - A_\star \right\|_{\mathrm{op}} \\ \leq \frac{90\sigma}{p} \sqrt{\frac{n + d \log  \frac{10}{p} + \log \det \overline{W} W_{\mathrm{sb}}^{-1} +\log  \frac{1}{\delta}  }{T \lambda_{\min} (W_{\mathrm{sb}})}} \right ] \geq 1 - 3\delta.
\end{dmath}
Further, for the specific case considered in this paper, we know that the process noise satisfies the $\sigma^2$-sub-Gaussian condition. The intuition behind this follows easily from the fact that if a random variable $X$ has the distribution $\mathcal{N}(0,\sigma^2)$, then $\mathbb{E}(\exp (\lambda X)) = \exp(\lambda^2 \sigma^2/2)$ for all $\lambda \in \mathbb{R}$, and thus $X$ is $\sigma^2$-sub-Gaussian, with a similar argument holding for Gaussian random vectors as well. We further note that the observed length of our state trajectory is $K$. Hence, we can write
\begin{align}
    \mathbb{P}\Bigg[&\mathbf{X}^{\mathsf{T}} \mathbf{X} \npreceq \frac{d \sigma^{2}}{\delta} K W_K\Bigg] \nonumber \\ =&\mathbb{P}\left[\lambda_{\max }\left(\left(K W_K\right)^{-1 / 2} \mathbf{X}^{\mathsf{T}} \mathbf{X}\left(K W_K\right)^{-1 / 2}\right) \geq \frac{d \sigma^{2}}{\delta}\right] \nonumber \\
    & \stackrel{(\bigtriangleup)}{\leq} \frac{\delta}{d \sigma^{2}} \cdot \mathbb{E}\left[\lambda_{\max }\left(\left(K W_K\right)^{-1 / 2} \mathbf{X}^{\mathsf{T}} \mathbf{X}\left(K W_K\right)^{-1 / 2}\right)\right] \nonumber \\
    & \leq \frac{\delta}{d \sigma^{2}} \cdot \mathbb{E}\left[\operatorname{tr}\left(\left(K W_K \right)^{-1 / 2} \mathbf{X}^{\mathsf{T}} \mathbf{X}\left(K W_K \right)^{-1 / 2}\right)\right] \nonumber \\
    & \stackrel{(\bigtriangledown)}{\leq} \delta,
\end{align}
where the rows of $\mathbf{X} \in \mathbb{R}^{K \times d}$ are formed using $\tilde{x}[k]$ for each $k$. The inequality $(\bigtriangleup)$ is a consequence of Markov's inequality and the inequality $(\bigtriangledown)$ is a consequence of $\mathbb{E} \left[ \mathbf{X}^{\mathsf{T}} \mathbf{X} \right] = \sigma^2 \sum_{j=1}^K W_j \preceq \sigma^2 K W_K$ and the linearity of the trace operator. The proof of the theorem follows from setting $\overline{W} = \frac{d \sigma^2}{\delta} W_K$, the equality $\log \operatorname{det}\left(\left(\frac{d}{\delta} \sigma^{2} W_K \right) \left(\sigma^{2} W_{\lfloor k / 2\rfloor}\right)^{-1} \right) = d \log ( d / \delta) +\log \operatorname{det}\left( W_K W_{\lfloor k / 2 \rfloor}^{-1}\right)$, and the fact that $\tilde{x}[k]$ satisfies the $(k,\sigma^2 W_{\lfloor k / 2 \rfloor},\frac{3}{20})$-block martingale small-ball condition (Proposition 3.1 of~\cite{simchowitz2018learning}).
\end{proof}

\begin{remark}
We note here that although the operator norm parameter estimation error in~\eqref{eq:bound_without_inputs} is stated in terms of $\tilde{A}$, the operator norm errors associated with the matrices $A_0, A_1, \ldots, A_{p-1}$ are strictly lesser compared to $\left\| \underline{\tilde{A}}[K] - \tilde{A} \right\|_{\mathrm{op}}$, since $A_0, A_1, \ldots, A_{p-1}$ are submatrices of $\tilde{A}$, and for any operator norm, the operator norm of a submatrix is upper bounded by one of the whole matrix (see Lemma A.9 of~\cite{foucart2013} for a proof).
\end{remark}

\begin{remark}
A finite-sample complexity bound similar to the one presented in Theorem~\ref{thm:samp_comp} can also be derived when we consider the ordinary least squares identification of the spatial parameters of a DT-FODS with inputs. In the context of our work, an external input may correspond to an exogenous electrical stimulus or an actuation or control signal. Therefore, within the purview of epileptic seizure mitigation using intracranial EEG data, the objective of the former is to suppress the overall length or duration of an epileptic seizure, thus steering the state of the neurophysiological system in consideration away from seizure-like activity using a control strategy like model predictive control~\cite{chatterjee2020fractional}.
\end{remark}

\section{Simulation Results on Real-Life Intracranial EEG Data}
\begin{figure}
    \centering
    \includegraphics[width=0.5\textwidth]{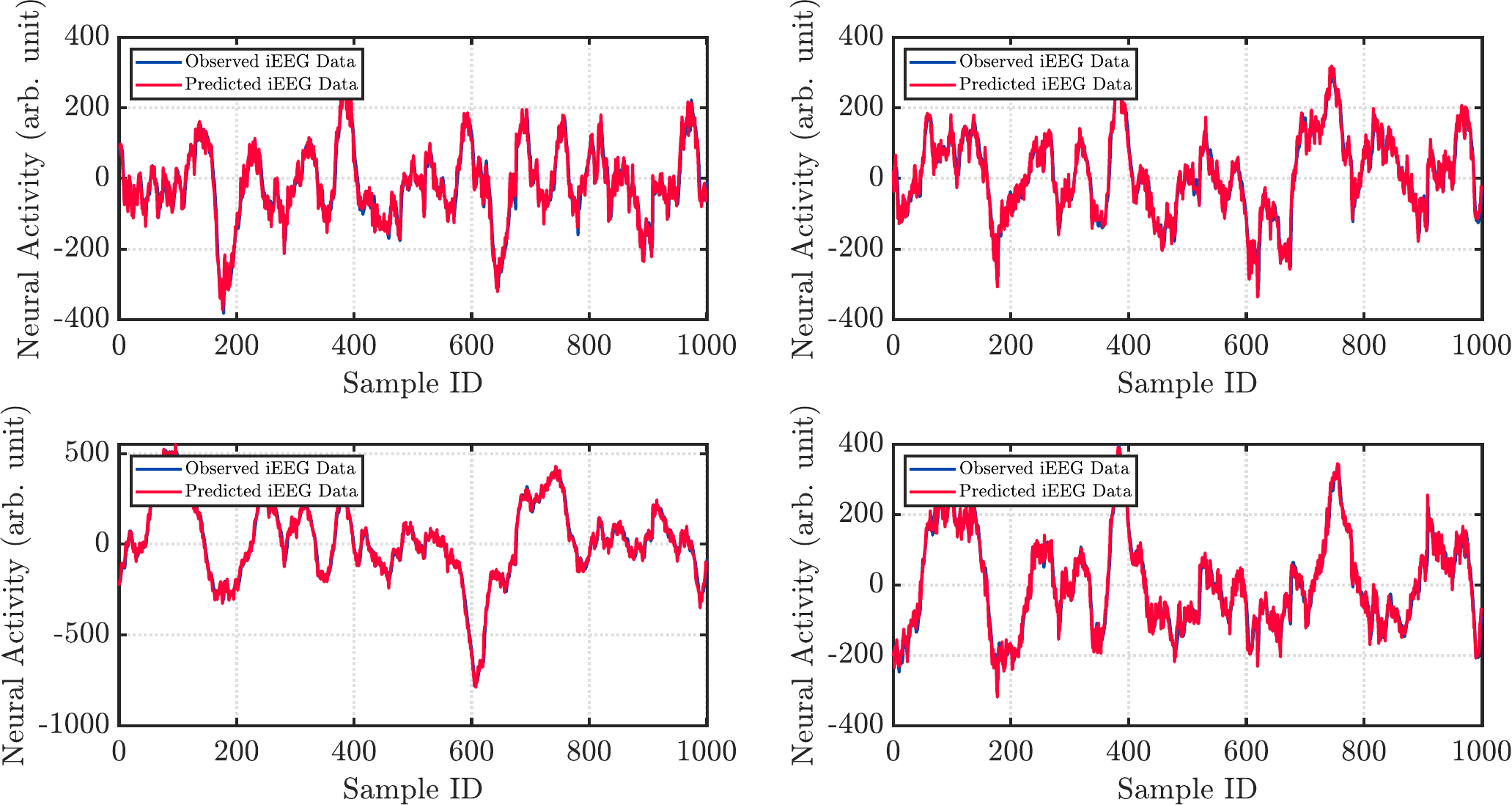}
    \caption{Performance of our approach on real-life intracranial EEG data.}
    \label{fig:eeg_sysid}
\end{figure}

\begin{figure}
    \centering
    \includegraphics[width=0.5\textwidth]{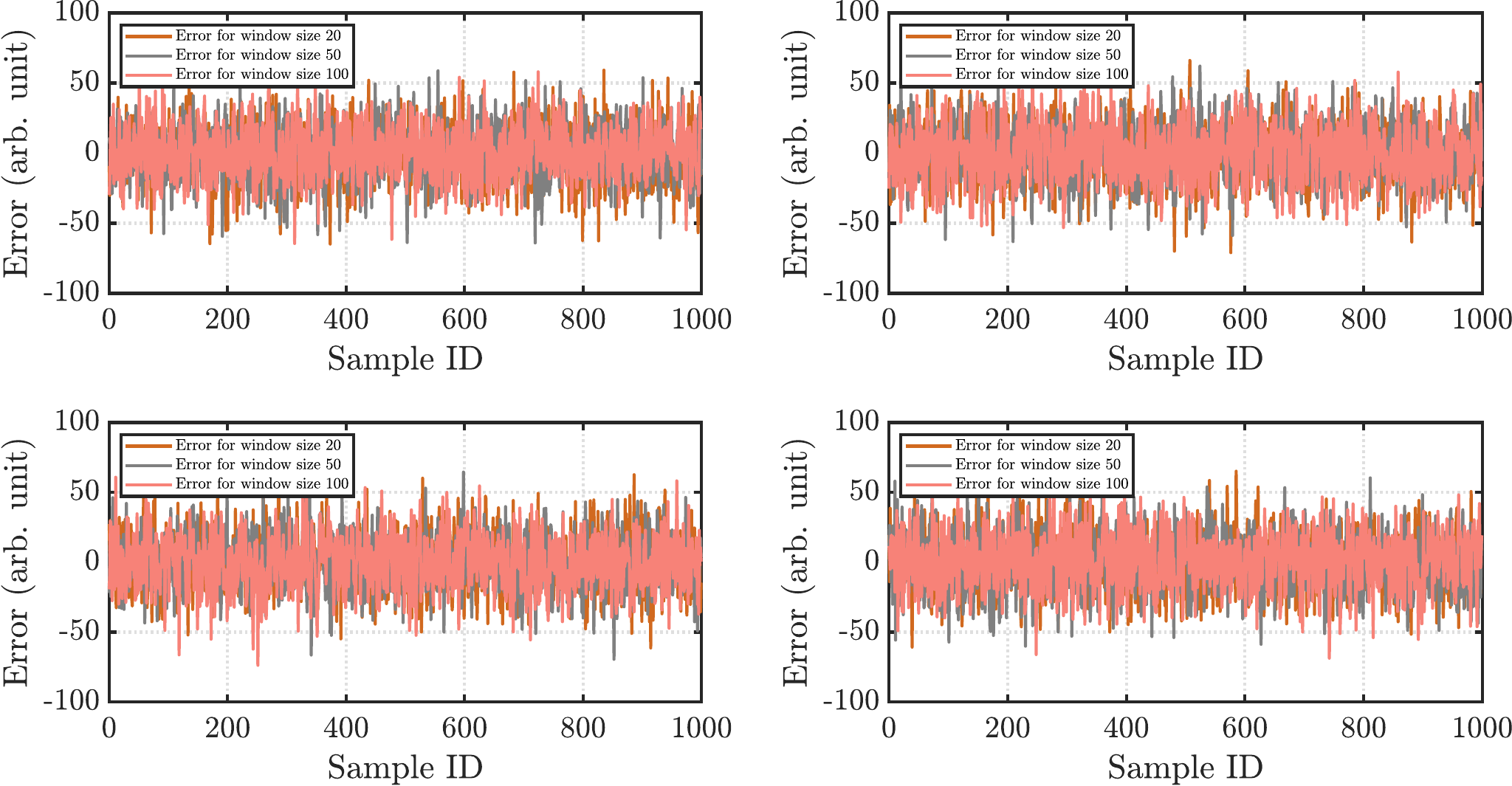}
    \caption{Variation of the error of the least squares prediction with respect to the observed data, with varying window sizes in the least squares optimization problems.}
    \label{fig:eeg_sysid_many}
\end{figure}

We present some preliminary results regarding the performance of the above approach on \mbox{DT-FODS}. Specifically, we use $1000$ noisy measurements taken from $4$ channels of an intracranial electroencephalographic (iEEG) signal which records the brain activity of subjects undergoing epileptic seizures. The signals were recorded and digitized at a sampling rate of $512$ Hz at the Hospital of the University of Pennsylvania, Philadelphia, PA. Subdural grid and strip electrodes were placed at specialized locations (dictated by a multidisciplinary team of neurologists, neurosurgeons, and a radiologist), with the electrodes themselves consisting of linear and two-dimensional arrays spanning $2.3$ mm in diameter and having a inter-contact spacing of $10$ mm~\cite{khambhati2015dynamic,ashourvan2020model}. All of the least squares optimization problems are solved using \texttt{CVX}~\cite{cvx},~\cite{grant08} with the aid of a \mbox{window-based} approach using a finite subset of the entire range of measurements. This is done because the time series under consideration is nonlinear, and it is not possible to characterize the entire gamut of measurements using very few parameters. Figure~\ref{fig:eeg_sysid} shows the performance of our method on the above data. Additionally, we also show in Figure~\ref{fig:eeg_sysid_many} the variation of the error of the least squares predictions with respect to the observed data, with varying window sizes in the least squares optimization problems. We see that the identified system parameters are able to predict the system states fairly closely, thus demonstrating that our approach can be used to learn the system parameters of a DT-FODS.

\section{Conclusions and Future Work}

In this paper, we present a framework that enables us to learn the spatial and temporal parameters of a \mbox{DT-FODS}. We provide non-asymptotic finite-sample complexity guarantees for the identification of the spatial components of autonomous and actuated \mbox{DT-FODS} using least squares as well as iteration complexity guarantees for the identification of the temporal components of a DT-FODS. We also show the efficacy of our proposed approach in identifying the system parameters of a DT-FODS that models iEEG signals characterizing the behavior of real-life subjects undergoing epileptic seizures.

There are a number of interesting directions which can be taken from here. A particular case that piques our interest is system identification using the Expectation-Maximization (EM) algorithm. Although approaches using the EM algorithm for linear~\cite{gibson2005robust} as well as nonlinear system identification~\cite{schon2011system} have existed in the literature for a while now, one immediately notices that there is a \mbox{long-standing} problem in characterizing theoretical robustness guarantees for the same. We are inspired by the preliminary analyses of \mbox{finite-sample} robustness guarantees for EM in~\cite{wu2016convergence,balakrishnan2017statistical,yan2017convergence}, and, therefore, we wish to characterize the sample complexity in identifying LTI as well as \mbox{fractional-order} systems using the EM algorithm. Furthermore, the system identification of \mbox{fractional-order} systems is an extremely \mbox{under-explored} field in general, with a lack of a systematic and unified theory, with some preliminary approaches utilizing wavelets~\cite{flandrin1992wavelet}, \mbox{frequency-domain} techniques~\cite{adams2006fractional,dzielinski2011identification}, or a sequential combination of wavelets and EM~\cite{gupta2018dealing}. Future work will focus on chalking out a general theory of identifying certain classes of \mbox{fractional-order} systems.

\balance
\bibliographystyle{IEEEtran}
\bibliography{IEEEabrv,mybibfile}

\begin{thebibliography}{10}
\providecommand{\url}[1]{#1}
\csname url@rmstyle\endcsname
\providecommand{\newblock}{\relax}
\providecommand{\bibinfo}[2]{#2}
\providecommand\BIBentrySTDinterwordspacing{\spaceskip=0pt\relax}
\providecommand\BIBentryALTinterwordstretchfactor{4}
\providecommand\BIBentryALTinterwordspacing{\spaceskip=\fontdimen2\font plus
\BIBentryALTinterwordstretchfactor\fontdimen3\font minus
  \fontdimen4\font\relax}
\providecommand\BIBforeignlanguage[2]{{%
\expandafter\ifx\csname l@#1\endcsname\relax
\typeout{** WARNING: IEEEtran.bst: No hyphenation pattern has been}%
\typeout{** loaded for the language `#1'. Using the pattern for}%
\typeout{** the default language instead.}%
\else
\language=\csname l@#1\endcsname
\fi
#2}}

\bibitem{ljung1999sysid}
L.~Ljung, Ed., \emph{{System Identification (2nd Ed.): Theory for the
  User}}.\hskip 1em plus 0.5em minus 0.4em\relax Upper Saddle River, NJ, USA:
  Prentice Hall PTR, 1999.

\bibitem{soderstrom1988system}
T.~S{\"o}derstr{\"o}m and P.~Stoica, \emph{System Identification}.\hskip 1em
  plus 0.5em minus 0.4em\relax Prentice-Hall, Inc., 1988.

\bibitem{abbasi2011regret}
Y.~Abbasi-Yadkori and C.~Szepesv{\'a}ri, ``Regret bounds for the adaptive
  control of linear quadratic systems,'' in \emph{Proceedings of the Conference
  on Learning Theory}.\hskip 1em plus 0.5em minus 0.4em\relax JMLR Workshop and
  Conference Proceedings, 2011, pp. 1--26.

\bibitem{dean2018regret}
S.~Dean, H.~Mania, N.~Matni, B.~Recht, and S.~Tu, ``Regret bounds for robust
  adaptive control of the linear quadratic regulator,'' in \emph{Advances in
  Neural Information Processing Systems}, 2018, pp. 4192--4201.

\bibitem{dean2019sample}
------, ``On the sample complexity of the linear quadratic regulator,''
  \emph{Foundations of Computational Mathematics}, pp. 1--47, 2019.

\bibitem{dean2019safely}
S.~Dean, S.~Tu, N.~Matni, and B.~Recht, ``Safely learning to control the
  constrained linear quadratic regulator,'' in \emph{Proceedings of the 2019
  American Control Conference (ACC)}.\hskip 1em plus 0.5em minus 0.4em\relax
  IEEE, 2019, pp. 5582--5588.

\bibitem{mania2019certainty}
H.~Mania, S.~Tu, and B.~Recht, ``Certainty equivalence is efficient for linear
  quadratic control,'' \emph{arXiv preprint arXiv:1902.07826}, 2019.

\bibitem{cohen2019learning}
A.~Cohen, T.~Koren, and Y.~Mansour, ``Learning linear-quadratic regulators
  efficiently with only $\sqrt{T}$ regret,'' in \emph{Proceedings of the
  International Conference on Machine Learning}.\hskip 1em plus 0.5em minus
  0.4em\relax PMLR, 2019, pp. 1300--1309.

\bibitem{tu2017non}
S.~Tu, R.~Boczar, A.~Packard, and B.~Recht, ``Non-asymptotic analysis of robust
  control from coarse-grained identification,'' \emph{arXiv preprint
  arXiv:1707.04791}, 2017.

\bibitem{faradonbeh2018finite}
M.~K.~S. Faradonbeh, A.~Tewari, and G.~Michailidis, ``Finite time
  identification in unstable linear systems,'' \emph{Automatica}, vol.~96, pp.
  342--353, 2018.

\bibitem{hazan2018spectral}
E.~Hazan, H.~Lee, K.~Singh, C.~Zhang, and Y.~Zhang, ``Spectral filtering for
  general linear dynamical systems,'' in \emph{Advances in Neural Information
  Processing Systems}, 2018, pp. 4639--4648.

\bibitem{hardt2018gradient}
M.~Hardt, T.~Ma, and B.~Recht, ``Gradient descent learns linear dynamical
  systems,'' \emph{Journal of Machine Learning Research}, vol.~19, no.~1, pp.
  1025--1068, 2018.

\bibitem{sarkar2019near}
T.~Sarkar and A.~Rakhlin, ``Near optimal finite time identification of
  arbitrary linear dynamical systems,'' in \emph{Proceedings of the
  International Conference on Machine Learning}.\hskip 1em plus 0.5em minus
  0.4em\relax PMLR, 2019, pp. 5610--5618.

\bibitem{sarkar2021finite}
T.~Sarkar, A.~Rakhlin, and M.~A. Dahleh, ``Finite time {LTI} system
  identification,'' \emph{Journal of Machine Learning Research}, vol.~22,
  no.~26, pp. 1--61, 2021.

\bibitem{oymak2019non}
S.~Oymak and N.~Ozay, ``{Non-asymptotic identification of LTI systems from a
  single trajectory},'' in \emph{Proceedings of the 2019 American Control
  Conference (ACC)}.\hskip 1em plus 0.5em minus 0.4em\relax IEEE, 2019, pp.
  5655--5661.

\bibitem{tsiamis2019finite}
A.~Tsiamis and G.~J. Pappas, ``Finite sample analysis of stochastic system
  identification,'' in \emph{Proceedings of the 58th Conference on Decision and
  Control (CDC)}.\hskip 1em plus 0.5em minus 0.4em\relax IEEE, 2019, pp.
  3648--3654.

\bibitem{simchowitz2018learning}
M.~Simchowitz, H.~Mania, S.~Tu, M.~I. Jordan, and B.~Recht, ``{Learning without
  mixing: Towards a sharp analysis of linear system identification},'' in
  \emph{Proceedings of the Conference on Learning Theory}.\hskip 1em plus 0.5em
  minus 0.4em\relax PMLR, 2018, pp. 439--473.

\bibitem{simchowitz2019learning}
M.~Simchowitz, R.~Boczar, and B.~Recht, ``Learning linear dynamical systems
  with semi-parametric least squares,'' in \emph{Proceedings of the Conference
  on Learning Theory}.\hskip 1em plus 0.5em minus 0.4em\relax PMLR, 2019, pp.
  2714--2802.

\bibitem{wagenmaker2020active}
A.~Wagenmaker and K.~Jamieson, ``Active learning for identification of linear
  dynamical systems,'' in \emph{Proceedings of the Conference on Learning
  Theory}.\hskip 1em plus 0.5em minus 0.4em\relax PMLR, 2020, pp. 3487--3582.

\bibitem{jedra2019sample}
Y.~Jedra and A.~Proutiere, ``Sample complexity lower bounds for linear system
  identification,'' in \emph{Proceedings of the 58th Conference on Decision and
  Control (CDC)}.\hskip 1em plus 0.5em minus 0.4em\relax IEEE, 2019, pp.
  2676--2681.

\bibitem{jedra2020finite}
------, ``Finite-time identification of stable linear systems optimality of the
  least-squares estimator,'' in \emph{Proceedings of the 59th Conference on
  Decision and Control (CDC)}.\hskip 1em plus 0.5em minus 0.4em\relax IEEE,
  2020, pp. 996--1001.

\bibitem{zheng2021non}
Y.~{Zheng} and N.~{Li}, ``Non-asymptotic identification of linear dynamical
  systems using multiple trajectories,'' \emph{IEEE Control Systems Letters},
  vol.~5, no.~5, pp. 1693--1698, 2021.

\bibitem{moon2008chaotic}
F.~C. Moon, \emph{{Chaotic and Fractal Dynamics: Introduction for Applied
  Scientists and Engineers}}.\hskip 1em plus 0.5em minus 0.4em\relax John Wiley
  \& Sons, 2008.

\bibitem{lundstrom2008fractional}
B.~N. Lundstrom, M.~H. Higgs, W.~J. Spain, and A.~L. Fairhall, ``Fractional
  differentiation by neocortical pyramidal neurons,'' \emph{Nature
  Neuroscience}, vol.~11, no.~11, p. 1335, 2008.

\bibitem{werner2010fractals}
G.~Werner, ``Fractals in the nervous system: Conceptual implications for
  theoretical neuroscience,'' \emph{Frontiers in \text{P}hysiology}, vol.~1,
  p.~15, 2010.

\bibitem{thurner2003scaling}
S.~Thurner, C.~Windischberger, E.~Moser, P.~Walla, and M.~Barth, ``Scaling laws
  and persistence in human brain activity,'' \emph{Physica A: Statistical
  Mechanics and its Applications}, vol. 326, no. 3-4, pp. 511--521, 2003.

\bibitem{teich1997fractal}
M.~C. Teich, C.~Heneghan, S.~B. Lowen, T.~Ozaki, and E.~Kaplan, ``Fractal
  character of the neural spike train in the visual system of the cat,''
  \emph{Journal of the \text{O}ptical \text{S}ociety of \text{A}merica},
  vol.~14, no.~3, pp. 529--546, 1997.

\bibitem{west2014networks}
B.~J. West, M.~Turalska, and P.~Grigolini, \emph{{Networks of Echoes:
  Imitation, Innovation and Invisible Leaders}}.\hskip 1em plus 0.5em minus
  0.4em\relax Springer Science \& Business Media, 2014.

\bibitem{petravs2011fractional}
I.~Petr{\'a}{\v{s}}, ``Fractional-order chaotic systems,'' in
  \emph{Fractional-Order Nonlinear Systems}.\hskip 1em plus 0.5em minus
  0.4em\relax Springer, 2011, pp. 103--184.

\bibitem{chen2010anomalous}
W.~Chen, H.~Sun, X.~Zhang, and D.~Koro{\v{s}}ak, ``Anomalous diffusion modeling
  by fractal and fractional derivatives,'' \emph{Computers \& Mathematics with
  Applications}, vol.~59, no.~5, pp. 1754--1758, 2010.

\bibitem{xuecps}
Y.~Xue, S.~Rodriguez, and P.~Bogdan, ``{A spatio-temporal fractal model for a
  CPS approach to brain-machine-body interfaces},'' in \emph{Proceedings of the
  2016 Design, Automation Test in Europe Conference Exhibition (DATE)}, March
  2016, pp. 642--647.

\bibitem{romero2020fractional}
O.~Romero, S.~Chatterjee, and S.~Pequito, ``Fractional-order model predictive
  control for neurophysiological cyber-physical systems: A case study using
  transcranial magnetic stimulation,'' in \emph{Proceedings of the 2020
  American Control Conference (ACC)}.\hskip 1em plus 0.5em minus 0.4em\relax
  IEEE, 2020, pp. 4996--5001.

\bibitem{chatterjee2020fractional}
S.~Chatterjee, O.~Romero, A.~Ashourvan, and S.~Pequito, ``Fractional-order
  model predictive control as a framework for electrical neurostimulation in
  epilepsy,'' \emph{Journal of Neural Engineering}, vol.~17, no.~6, 2020, Art.
  no. 066017.

\bibitem{magin2006fractional}
R.~L. Magin, \emph{Fractional Calculus in Bioengineering}.\hskip 1em plus 0.5em
  minus 0.4em\relax Begell House Redding, 2006.

\bibitem{DzielinskiFOS}
A.~Dzieli\'{n}ski and D.~Sierociuk, ``Adaptive feedback control of fractional
  order discrete state-space systems,'' in \emph{Proceedings of the
  International Conference on Computational Intelligence for Modelling, Control
  and Automation and International Conference on Intelligent Agents, Web
  Technologies and Internet Commerce (CIMCA-IAWTIC'06)}, vol.~1, Nov 2005, pp.
  804--809.

\bibitem{flandrin1992wavelet}
P.~Flandrin, ``{Wavelet analysis and synthesis of fractional Brownian
  motion},'' \emph{IEEE Transactions on Information Theory}, vol.~38, no.~2,
  pp. 910--917, 1992.

\bibitem{sikorski1982bisection}
K.~Sikorski, ``Bisection is optimal,'' \emph{Numerische Mathematik}, vol.~40,
  no.~1, pp. 111--117, 1982.

\bibitem{foucart2013}
S.~Foucart and H.~Rauhut, \emph{A Mathematical Introduction to Compressive
  Sensing.}, ser. Applied and Numerical Harmonic Analysis.\hskip 1em plus 0.5em
  minus 0.4em\relax Birkh\"auser, 2013.

\bibitem{khambhati2015dynamic}
A.~N. Khambhati, K.~A. Davis, B.~S. Oommen, S.~H. Chen, T.~H. Lucas, B.~Litt,
  and D.~S. Bassett, ``Dynamic network drivers of seizure generation,
  propagation and termination in human neocortical epilepsy,'' \emph{PLoS
  Computational Biology}, vol.~11, no.~12, 2015, Art. no. e1004608.

\bibitem{ashourvan2020model}
A.~Ashourvan, S.~Pequito, A.~N. Khambhati, F.~Mikhail, S.~N. Baldassano, K.~A.
  Davis, T.~H. Lucas, J.~M. Vettel, B.~Litt, G.~J. Pappas, \emph{et~al.},
  ``Model-based design for seizure control by stimulation,'' \emph{Journal of
  Neural Engineering}, vol.~17, no.~2, 2020, Art. no. 026009.

\bibitem{cvx}
M.~Grant and S.~Boyd, ``{CVX}: Matlab software for disciplined convex
  programming, version 2.1,'' \url{http://cvxr.com/cvx}, Mar. 2014.

\bibitem{grant08}
------, ``Graph implementations for nonsmooth convex programs,'' in
  \emph{Recent Advances in Learning and Control}, ser. Lecture Notes in Control
  and Information Sciences, V.~Blondel, S.~Boyd, and H.~Kimura, Eds.\hskip 1em
  plus 0.5em minus 0.4em\relax Springer-Verlag Limited, 2008, pp. 95--110,
  \url{http://stanford.edu/~boyd/graph\_dcp.html}.

\bibitem{gibson2005robust}
S.~Gibson and B.~Ninness, ``Robust maximum-likelihood estimation of
  multivariable dynamic systems,'' \emph{Automatica}, vol.~41, no.~10, pp.
  1667--1682, 2005.

\bibitem{schon2011system}
T.~B. Sch{\"o}n, A.~Wills, and B.~Ninness, ``System identification of nonlinear
  state-space models,'' \emph{Automatica}, vol.~47, no.~1, pp. 39--49, 2011.

\bibitem{wu2016convergence}
C.~Wu, C.~Yang, H.~Zhao, and J.~Zhu, ``{On the convergence of the EM algorithm:
  A data-adaptive analysis},'' \emph{arXiv preprint arXiv:1611.00519}, 2016.

\bibitem{balakrishnan2017statistical}
S.~Balakrishnan, M.~J. Wainwright, and B.~Yu, ``{Statistical guarantees for the
  EM algorithm: From population to sample-based analysis},'' \emph{The Annals
  of Statistics}, vol.~45, no.~1, pp. 77--120, 2017.

\bibitem{yan2017convergence}
B.~Yan, M.~Yin, and P.~Sarkar, ``Convergence of gradient {EM} on
  multi-component mixture of {G}aussians,'' in \emph{Advances in Neural
  Information Processing Systems}, 2017, pp. 6956--6966.

\bibitem{adams2006fractional}
J.~L. Adams, T.~T. Hartley, and C.~F. Lorenzo, ``Fractional-order system
  identification using complex order-distributions,'' \emph{IFAC Proceedings
  Volumes}, vol.~39, no.~11, pp. 200--205, 2006.

\bibitem{dzielinski2011identification}
A.~Dzieli{\'n}ski, D.~Sierociuk, G.~Sarwas, I.~Petr{\'a}{\v{s}}, I.~Podlubny,
  and T.~{\v{S}}kovr{\'a}nek, ``{Identification of the fractional-order
  systems: A frequency domain approach},'' \emph{Acta Montanistica Slovaca},
  2011.

\bibitem{gupta2018dealing}
G.~Gupta, S.~Pequito, and P.~Bogdan, ``{Dealing with unknown unknowns:
  Identification and selection of minimal sensing for fractional dynamics with
  unknown inputs},'' in \emph{Proceedings of the 2018 American Control
  Conference (ACC)}.\hskip 1em plus 0.5em minus 0.4em\relax IEEE, 2018, pp.
  2814--2820.

\end{thebibliography}

\end{document}